\DeclareMathOperator{\im}{im}
\DeclareMathOperator{\spa}{span}
\DeclareMathOperator{\sat}{sat}
\newtheorem{theorem}{Theorem}
\newtheorem{definition}[theorem]{Definition}
\newtheorem{proposition}[theorem]{Proposition}
\newtheorem{corollary}[theorem]{Corollary}
\newtheorem{exmp}{Example}[section]
\newtheorem{remark}[theorem]{Remark}
\title{\LARGE \bf
Constrained proportional integral control of dynamical distribution
networks with state constraints*
}
\author{Jieqiang Wei$^{1}$ and Arjan van der Schaft$^{2}$
\thanks{*The work of the first author is supported by the Chinese Science
Council (CSC). The research of the second author leading to these
results has received support from the EU 7th Framework Programme
[FP7/2007-2013] under grant agreement no. 257462 HYCON2 Network of
Excellence.}% <-this % stops a space
\thanks{$^{1}$,$^{2}$Johann Bernoulli Institute for Mathematics and Computer
Science, University of Groningen, PO Box 407, 9700 AK, the
Netherlands.\newline {\tt\small J. Wei@rug.nl, A.J.van.der.Schaft@rug.nl}}%
}
\begin{document}

\maketitle
\thispagestyle{empty}
\pagestyle{empty}

%%%%%%%%%%%%%%%%%%%%%%%%%%%%%%%%%%%%%%%%%%%%%%%%%%%%%%%%%%%%%%%%%%%%%%%%%%%%%%%%
\begin{abstract}
This paper studies a basic model of a dynamical distribution network,
where the network topology is given by a directed graph with storage
variables corresponding to the vertices and flow inputs
corresponding to the edges. We aim at regulating the system to consensus, while
the storage variables remain greater or equal than a given lower bound. 
The problem is solved by using a distributed PI controller structure with 
constraints which vary in time. It is shown how the constraints can be obtained
by solving an optimization problem. 
\end{abstract}

%%%%%%%%%%%%%%%%%%%%%%%%%%%%%%%%%%%%%%%%%%%%%%%%%%%%%%%%%%%%%%%%%%%%%%%%%%%%%%%%
\section{INTRODUCTION}

In this paper we continue our study of the dynamics of distribution
networks. Identifying the network with a directed graph we associate with every
vertex of the graph a state variable corresponding to {\it storage}, and with
every edge a control input variable corresponding to {\it flow}, possibly 
subject 
to constraints. In previous work \cite{Wei2012,Wei2013,Weimtns2014} it has
been shown under which conditions a constrained proportional-integral (PI)
controller will regulate the system towards output agreement in the presence of 
unknown constant external disturbances (corresponding to constant in/outflows of 
the network). 

In many cases of practical interest it is
natural to require that the state variables of the distribution network will remain larger than a given
minimal value, e.g. zero. A hydraulic network with state variables being the
storage of fluid is a clear example of such a situation. On the other hand, the
previously developed PI-controller can give rise to damped oscillatory behavior
which may violate such state constraints. The aim of the current paper is to
modify the PI-controller in such a way that the lower bounds for the
state variables will be satisfied for all time while the system will
still converge to output agreement. This is done by adapting the constraints of 
the PI controller. % depending on the current flow values and information about 
%state variables reaching their lower bound.

The main related work can be summarized as follows.. In \cite{depersis} an alternative 
scheme is given in order that the state variables remain nonnegative. However, 
this scheme does not respect mass conservation. In \cite{Blanchini2000}, the
authors consider a similar distribution network model but with a proportional 
controller instead of a PI controller. A discontinuous Lyapunov-based 
controller is given to stabilize the system without violating the storage and 
flow
constraints; however it is not robust 
with respect to constant disturbances). In \cite{Bauso2011}, 
using the same model as in \cite{Blanchini2000}, the authors focus on a
different problem of driving the state to a small neighborhood of the reference
value and relate the control input value at equilibrium to an optimization
problem. 

The structure of the paper is as follows. Preliminaries and notations are given 
in Section II. In Section \ref{model}, we introduce the basic model which can be 
identified as a port-Hamiltonian system, in line with the
general definition of port-Hamiltonian system on graphs
\cite{schaftSIAM, schaftCDC08, schaftBosgrabook,
schaftNECSYS10}; see also \cite{allgower11,Mesbahi11}. In 
Section \ref{review} we briefly recall from
our previous work \cite{Wei2012,Wei2013,Weimtns2014} the necessary and 
sufficient graphical conditions in order that a constrained PI controller 
structure will solve the regulation problem.

In Section \ref{problem} we formulate the main problem of this paper, namely the
adaptation 
of the constraints of the PI-controller such that the system will reach
output agreement while the state variables will remain greater or equal than a 
given
lower bound. In Section \ref{design} an optimal
control protocol for the adaptation of the flow (control) constraints is 
developed, while stability analysis of the scheme is given in
Section \ref{proof}. The conclusions are in Section \ref{conclusion}.

\section{Preliminaries and notations}
We recall some standard definitions regarding directed graphs,
as can be found e.g. in \cite{Bollobas98}. A \textit{directed graph}
$\mathcal{G}$ consists of a finite set $\mathcal{V}$ of \textit{vertices}
and a finite set $\mathcal{E}$ of \textit{edges}, together
with a mapping from $\mathcal{E}$ to the set of ordered pairs of
$\mathcal{V}$, where no self-loops are allowed. Thus to any edge
$e\in\mathcal{E}$ there corresponds an ordered pair
$(v,w)\in\mathcal{V}\times\mathcal{V}$
(with $v\not=w$), representing the tail vertex $v$ and the head
vertex $w$ of this edge.

A directed graph is specified by its \textit{incidence
matrix} $B$, which is an $n\times m$ matrix, $n$ and $m$ being the
number of vertices and edges respectively, with $(i,j)^{\text{th}}$
element equal to $1$ if the $j^{\text{th}}$ edge is towards vertex
$i$, and equal to $-1$ if the $j^{\text{th}}$ edge is originating from
vertex $i$, and $0$ otherwise. In this paper `graph' will throughout mean 
`directed graph' unless stated otherwise.
A graph is {\it strongly connected} if it is
possible to reach any vertex starting from any other vertex by traversing edges
following their directions. It is called {\it weakly connected}
if it is possible to reach any vertex from every other vertex using the edges
{\it not} taking into account their direction. A graph is weakly connected if
and only if $\ker B^T = \spa \mathds{1}_n$. Here $\mathds{1}_n$ denotes the
$n$-dimensional vector with all elements equal to $1$. A graph that is not
weakly connected falls apart into a number of weakly connected components. The 
number of connected components is equal to $\dim
\ker B^T$.
For each vertex, the number of incoming and outgoing edges are called the {\it 
in} and {\it out-degree} of the vertex respectively. A graph is called
{\it balanced} if and only if the in-degree and out-degree of every vertex are
equal to each other. A graph is balanced if and only if $\mathds{1}_n \in \ker B$.

Given a graph, we define its \textit{vertex space} as the vector space of all
functions from $\mathcal{V}$ to some linear space $\mathcal{R}$. In the rest of
this paper we will take $\mathcal{R}=\mathbb{R}$, in which case
the vertex space can be identified with $\mathbb{R}^{n}$. Similarly, we define
its \textit{edge space} as the
vector space of all functions from $\mathcal{E}$ to $\mathcal{R} = \mathbb{R}$,
which can be identified with $\mathbb{R}^{m}$. In this way, the incidence matrix
$B$ of the graph can be also regarded as the matrix representation of a linear
map from the edge space $\mathbb{R}^m$ to the vertex space $\mathbb{R}^n$. Extensions of the results to general linear spaces $\mathcal{R}$ are straightforward.

\noindent
{\bf Notation}: For $a,b\in\{\mathbb{R},\pm \infty\}^m$ the notation $a \leqslant b$ (resp. $ a
< b$) will denote element-wise inequality $a_i \leq b_i$ (resp. $a_i <
b_i$),\,$i=1,\ldots,m$. For $a <
b$ the multidimensional
saturation function
$\sat(x\,;a,b): \mathbb{R}^m\rightarrow\mathbb{R}^m$ is defined as
\begin{equation}
\sat(x\,;a,b)_i  = \left\{ \begin{array}{ll}
a_i & \textrm{if $x_i\leq a_i,$}\\
x_i & \textrm{if $a_i<x_i<b_i,$}\\
b_i & \textrm{if $x_i\geq b_i$},
\end{array}
\, i=1,\ldots,m. \right.
\end{equation}

\section{A dynamical network model with PI controller}\label{model}

Let us consider the following dynamical system defined on the vertices of graph
(\cite{schaftNECSYS10,schaftCDC08,schaftSIAM}){\small
\begin{equation}\label{plant}
\begin{array}{rcl}
\dot{x} & = & u + E\bar{d}, \quad x,u \in \mathbb{R}^n, \quad d\in
\mathbb{R}^k \\[2mm]
y & = & \frac{\partial H}{\partial x}(x), \quad y \in \mathbb{R}^n,
\end{array}
\end{equation}
}where $H: \mathbb{R}^n \to \mathbb{R}$ is a differentiable function, and
$\frac{\partial H}{\partial x}(x)$ denotes the column vector of partial
derivatives of $H$. Here $x_i$ and $u_i$ are the state
and input variable associated to the $i^{\text{th}}$ vertex of the graph
respectively. And $E$ is an $n \times k$ matrix whose columns consist of exactly
one entry equal to $1$ (inflow) or $-1$ (outflow), while the rest of the
elements is zero. Thus $E$ specifies the $k$ terminal vertices where flows can
enter or leave the network (\cite{vanderschaftmaschkearchive}). Finally, $\bar{d}$ is a
vector of constant disturbances. System (\ref{plant}) defines a port-Hamiltonian
system
(\cite{vanderschaftmaschkearchive,
vanderschaftbook}), satisfying the energy-balance {\small
\begin{equation}
\frac{d}{dt}H = u^Ty+\frac{\partial^T H}{\partial x}(x)E\bar{d}.
\end{equation}}

Here the state variables on vertices are controlled by the flow in the edges of
network in the following manner {\small
\begin{equation}\label{F-connection}
 u=B\mu, \quad \mu\in\mathbb{R}^m
\end{equation}
}where $\mu_j$ is a flow variable
associated to the $j^{\text{th}}$ edge of the graph. In this paper we consider
the case when the controller is defined on the edges to provide the flow
variables. As explained in \cite{Wei2013}, when $d\neq0$, the proportional
control will not be sufficient to reach load balancing. Hence we consider a
proportional-integral (PI) controller given by the dynamic output feedback 
{\small \begin{equation}\label{PI}
 \begin{array}{rcl}
  \dot{\eta} & = & \zeta, \quad \eta, \zeta\in\mathbb{R}^m, \\ [2mm]
\mu & = & -R\zeta-\frac{\partial H_c}{\partial \eta}(\eta)
 \end{array}
\end{equation}
}where $\eta_i$ is state variable associated to
$i^{\text{th}}$ edge, $R$ is a diagonal matrix with strictly positive diagonal
elements $r_1,r_2,\ldots,r_m$, and $H_c$
the Hamiltonian function corresponding to the controller. Here the controller
is driven by the relative output of the systems (\ref{plant}) on vertices, i.e., {\small
\begin{equation}\label{P-connection}
 \zeta=B^Ty
\end{equation}}

The closed-loop system of 
(\ref{plant},\ref{F-connection},\ref{PI},\ref{P-connection}) is a 
port-Hamiltonian system {\small
\begin{equation}\label{closedloop}
\begin{bmatrix} \dot{x} \\[2mm] \dot{\eta} \end{bmatrix} =
\begin{bmatrix} -BRB^T & -B \\[2mm] B^T & 0 \end{bmatrix}
\begin{bmatrix} \frac{\partial H}{\partial x}(x) \\[2mm] \frac{\partial
H_c}{\partial \eta}(\eta) \end{bmatrix} +
\begin{bmatrix} E \\[2mm] 0 \end{bmatrix} \bar{d},
\end{equation}
}with total Hamiltonian {\small
\begin{equation}
H_{\mathrm{tot}}(x,\eta)\\ := H(x) + H_c(\eta).\nonumber 
\end{equation}}

Suppose now that the constant disturbance $\bar{d}$ satisfies the
{\it matching condition}, i.e., there exists a 
controller state $\bar{\eta}$ such that{\small
\begin{equation}\label{matching}
E \bar{d} = B\frac{\partial H_c}{\partial \eta}(\bar{\eta}).
\end{equation}
}By modifying the total Hamiltonian $H_{tot}$ into{\small 
\begin{equation}
 V_{\bar{d}}(x,\eta): = H(x)+H_c(\eta)-\frac{\partial^T H_c}{\partial
\eta}(\bar{\eta})(\eta-\bar{\eta})-H_c(\bar{\eta})
\end{equation}
}which serves as a candidate Lyapunov function, we can obtaine
the following theorem.
\begin{theorem}(\cite{Wei2012, Wei2013})
Consider the dynamical system 
(\ref{plant},\ref{PI},\ref{F-connection},\ref{P-connection}) on the graph 
$\mathcal{G}$. Let the constant disturbance $\bar{d}$ satisfies the matching 
condition (\ref{matching}) with a $\bar{\eta}$. Assume $V_{\bar{d}}(x,\eta)$ is 
radially unbounded. Then the trajectories of the
closed-loop system (\ref{closedloop}) will converge to an element of the load
balancing set{\small
\begin{equation}
 \mathcal{E}_{\mathrm{tot}} = \{ (x,\eta) \mid \frac{\partial H}{\partial x}(x)=
 \alpha \mathds{1}, \, \alpha \in \mathbb{R}, \, B\frac{\partial H_c}{\partial
 \eta}(\eta) = E\bar{d}\, \}. \nonumber
\end{equation}
}if and only if $\mathcal{G}$ is weakly connected.
\end{theorem}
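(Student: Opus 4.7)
The natural approach is a Lyapunov/LaSalle argument using $V_{\bar d}$, exploiting the port-Hamiltonian structure of (\ref{closedloop}); for the ``only if'' direction I would argue by decoupling over weakly connected components.

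\textbf{Step 1 (Lyapunov computation).} I would first differentiate $V_{\bar d}$ along (\ref{closedloop}). Writing $y=\tfrac{\partial H}{\partial x}(x)$ and $z=\tfrac{\partial H_c}{\partial \eta}(\eta)$, the gradient of $V_{\bar d}$ is $(y,\,z-\bar z)$ with $\bar z:=\tfrac{\partial H_c}{\partial \eta}(\bar\eta)$. Substituting $\dot x=-BRB^Ty-Bz+E\bar d$ and $\dot\eta=B^Ty$ and then using the matching condition (\ref{matching}) to replace $E\bar d$ by $B\bar z$, the two cross terms $y^TB(\bar z-z)$ and $(z-\bar z)^TB^Ty$ cancel by skew-symmetry and one is left with
\begin{equation}
\dot V_{\bar d}=-y^TBRB^Ty\le 0. \nonumber
\end{equation}
This is the standard port-Hamiltonian dissipation inequality; no clever manipulation is needed beyond the bookkeeping.

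\textbf{Step 2 (LaSalle, the ``if'' direction).} Radial unboundedness of $V_{\bar d}$ together with $\dot V_{\bar d}\le 0$ gives boundedness of every trajectory, so LaSalle's invariance principle applies. On the largest invariant set $\Omega$ inside $\{\dot V_{\bar d}=0\}$ one has $RB^Ty=0$, hence $B^Ty=0$ since $R\succ 0$. Weak connectedness then forces $\ker B^T=\mathrm{span}\,\mathds 1_n$, so $y=\alpha\mathds 1_n$ for some scalar $\alpha$, which is the first defining condition of $\mathcal E_{\mathrm{tot}}$. Invariance of $\Omega$ implies $\dot\eta=B^Ty\equiv 0$, so $\eta$ is constant on each trajectory in $\Omega$, whence $\dot x=-Bz+E\bar d$ is also a constant vector. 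But the trajectory is bounded, and a bounded orbit with constant velocity must have zero velocity; therefore $Bz=E\bar d$, which is the second defining condition. This yields $\Omega\subseteq\mathcal E_{\mathrm{tot}}$, and LaSalle gives convergence.

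\textbf{Step 3 (Necessity).} If $\mathcal G$ is not weakly connected, I would show that $\mathcal E_{\mathrm{tot}}$ fails to attract every trajectory. Partition the vertices into weakly connected components; then after a relabelling $B$ is block-diagonal and the closed loop (\ref{closedloop}) decouples into independent subsystems, one per component. Within each component the argument of Step~2 still applies and produces a component-specific consensus level $\alpha^{(k)}$ determined essentially by initial data and the component-local part of $\bar d$. For generic initial conditions these $\alpha^{(k)}$ are distinct, so $y$ converges to a piecewise-constant vector that is \emph{not} a scalar multiple of $\mathds 1_n$, contradicting $(x,\eta)\to\mathcal E_{\mathrm{tot}}$.

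\textbf{Main obstacle.} The only non-mechanical point is the inference $\dot x=0$ on the invariant set in Step~2: it is tempting to stop at ``$\eta$ is constant'' and declare victory, but the second condition in $\mathcal E_{\mathrm{tot}}$ is not automatic from $\dot V_{\bar d}=0$ alone and really does need the boundedness coming from radial unboundedness of $V_{\bar d}$. For the converse, the mild but genuine subtlety is that one must exhibit initial data for which the per-component consensus values differ; this is where an explicit, even scalar, choice of $H$ and $H_c$ in one component makes the counterexample transparent.
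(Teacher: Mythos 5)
Your proposal is correct and takes essentially the same route as the paper's own treatment: the paper only cites this theorem from \cite{Wei2012, Wei2013}, but the argument it sketches for its main theorem in Section VII is exactly your scheme --- the shifted Hamiltonian $V_{\bar d}$ as Lyapunov function, the dissipation identity $\dot V_{\bar d}=-y^TBRB^Ty$ obtained via the matching condition and skew-symmetry, LaSalle plus boundedness to upgrade ``$\eta$ constant'' to $Bz=E\bar d$, and decoupling over weakly connected components for necessity. Your flagged subtlety (that $Bz=E\bar d$ is not automatic from $\dot V_{\bar d}=0$ and needs the boundedness from radial unboundedness) is a genuine and correctly handled point.
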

\begin{corollary}
If $\ker B = 0$, which is equivalent \cite{Bollobas98} to the graph having no
{\it cycles}, then for every $\bar{d}$ there exists a {\it unique} 
$\frac{\partial H_c}{\partial \eta}(\bar{\eta})$
satisfying (\ref{matching}). Suppose $H_c$ has positive definite Hessian 
matrix,  then in (\ref{matching}) $\bar{\eta}$ is also {\it 
unique} and the convergence is towards the set
$\mathcal{E}_{\mathrm{tot}} = \{ (x, \bar{\eta})
\mid \frac{\partial H}{\partial x}(x) = \alpha \mathds{1}, \alpha \in
\mathbb{R}\}$.
\end{corollary}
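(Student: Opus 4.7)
The plan is to derive both uniqueness claims from linear-algebra properties of $B$ and of $\nabla H_c$, and then read off the refined convergence set directly from the preceding theorem.

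First I would establish uniqueness of $\frac{\partial H_c}{\partial \eta}(\bar{\eta})$. The equivalence between $\ker B = 0$ and the graph having no cycles is the cited fact from \cite{Bollobas98}. With $B$ injective, if two vectors $v_1, v_2 \in \mathbb{R}^m$ both satisfy $Bv_i = E\bar{d}$, then $B(v_1-v_2)=0$ forces $v_1=v_2$; hence the gradient value appearing in (\ref{matching}) is uniquely determined by $\bar{d}$, once it exists.

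Next I would promote this to uniqueness of $\bar{\eta}$ itself, invoking positive definiteness of $\nabla^2 H_c$. Strict convexity of $H_c$ makes $\nabla H_c$ injective, as witnessed by the mean-value identity $\nabla H_c(\eta_1) - \nabla H_c(\eta_2) = \bigl(\int_0^1 \nabla^2 H_c(\eta_2 + s(\eta_1-\eta_2))\, ds\bigr)(\eta_1-\eta_2)$, whose integrated Hessian is positive definite and therefore nonsingular. Combined with the previous step, $\bar{\eta}$ is uniquely pinned down by $\bar{d}$.

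Finally, the preceding theorem delivers convergence to $\mathcal{E}_{\mathrm{tot}} = \{(x,\eta)\mid \frac{\partial H}{\partial x}(x)=\alpha\mathds{1},\, B\frac{\partial H_c}{\partial \eta}(\eta)=E\bar{d}\}$. The second defining equation is exactly the matching condition, so by the uniqueness just established every admissible $\eta$ equals $\bar{\eta}$; the set collapses to $\{(x,\bar{\eta})\mid \frac{\partial H}{\partial x}(x)=\alpha\mathds{1},\, \alpha\in\mathbb{R}\}$, as claimed. The only subtle point worth flagging is bookkeeping around the theorem's hypotheses: weak connectedness and radial unboundedness of $V_{\bar{d}}$ must still hold, and the existence half of the first claim additionally requires $E\bar{d} \in \im B$, a compatibility condition implicit in invoking the theorem. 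None of these affect the structural argument above.
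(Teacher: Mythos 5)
Your argument is correct and is precisely the intended justification: the paper states this corollary without proof, and injectivity of $B$ (from $\ker B=0$) together with injectivity of $\nabla H_c$ (from the positive definite Hessian, via exactly the integrated-Hessian identity you give) is all that is needed to collapse the load-balancing set of the preceding theorem to $\{(x,\bar{\eta})\mid \frac{\partial H}{\partial x}(x)=\alpha\mathds{1}\}$. Your side remark is also a fair catch: existence of a solution to (\ref{matching}) for \emph{every} $\bar{d}$ additionally requires $\im E\subset \im B$, a hypothesis the corollary as stated glosses over and which the paper only makes explicit in the subsequent corollary.
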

\begin{corollary}
In case of the standard quadratic Hamiltonians $H(x) = \frac{1}{2} \| x \|^2,
H_c(\eta)=\frac{1}{2} \| \eta \|^2$ there exists for every $\bar{d}$
a controller state $\bar{\eta}$ such that (\ref{matching}) holds if and only 
if{\small
\begin{equation}\label{matching1}
\im E \subset \im B.
\end{equation}
}Furthermore, in this case $V_{\bar{d}}$ equals the radially unbounded function
$\frac{1}{2}
\| x \|^2 + \frac{1}{2} \| \eta - \bar{\eta} \|^2$, while convergence will be
towards the load balancing set $\mathcal{E}_{\mathrm{tot}} = \{ (x,\eta) \mid x =
\alpha
\mathds{1}, \alpha \in \mathbb{R},\, B\eta = E\bar{d}\}$.
\end{corollary}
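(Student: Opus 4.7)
The plan is to specialize Theorem~1 to the quadratic case, where every partial derivative reduces to the argument itself, and to dispatch the three assertions of the corollary in turn.

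First, I would observe that for $H_c(\eta)=\tfrac12\|\eta\|^2$ one has $\tfrac{\partial H_c}{\partial\eta}(\bar\eta)=\bar\eta$, so the matching condition (\ref{matching}) collapses to the linear equation $E\bar d = B\bar\eta$. A solution $\bar\eta$ exists for every $\bar d\in\mathbb{R}^k$ if and only if each column of $E$ lies in $\im B$, i.e., $\im E\subset\im B$; this proves the claim surrounding (\ref{matching1}).

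Next, I would substitute $H(x)=\tfrac12\|x\|^2$, $H_c(\eta)=\tfrac12\|\eta\|^2$, $\tfrac{\partial H_c}{\partial\eta}(\bar\eta)=\bar\eta$ and $H_c(\bar\eta)=\tfrac12\|\bar\eta\|^2$ into the defining formula for $V_{\bar d}$ and complete the square in $\eta$. The cross term $-\bar\eta^{\top}(\eta-\bar\eta)$ together with $\tfrac12\|\eta\|^2$ and $-\tfrac12\|\bar\eta\|^2$ assembles into $\tfrac12\|\eta-\bar\eta\|^2$, giving $V_{\bar d}(x,\eta)=\tfrac12\|x\|^2+\tfrac12\|\eta-\bar\eta\|^2$. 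As a sum of two squared norms this is plainly radially unbounded in $(x,\eta)$.

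Finally, with the matching condition in force and $V_{\bar d}$ radially unbounded, Theorem~1 applies (carrying over the standing hypothesis that $\mathcal G$ is weakly connected), so trajectories of (\ref{closedloop}) converge to $\mathcal E_\mathrm{tot}$. Using once more $\tfrac{\partial H}{\partial x}(x)=x$ and $\tfrac{\partial H_c}{\partial\eta}(\eta)=\eta$, the condition $\tfrac{\partial H}{\partial x}(x)=\alpha\mathds{1}$ becomes $x=\alpha\mathds{1}$ and $B\tfrac{\partial H_c}{\partial\eta}(\eta)=E\bar d$ becomes $B\eta=E\bar d$, yielding the asserted form of $\mathcal E_\mathrm{tot}$. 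There is no serious obstacle: the corollary is essentially a bookkeeping exercise, and the only steps worth verifying carefully are the standard linear-algebra equivalence $\im E\subset\im B \Leftrightarrow$ universal solvability of $B\bar\eta=E\bar d$, and the completion-of-square that produces the clean Lyapunov function.
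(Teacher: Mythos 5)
Your proposal is correct and follows the same route the paper intends: the corollary is a direct specialization of Theorem~1 to the quadratic Hamiltonians, with the matching condition reducing to the linear system $B\bar\eta=E\bar d$ (solvable for all $\bar d$ iff $\im E\subset\im B$) and the completion of the square giving $V_{\bar d}=\tfrac12\|x\|^2+\tfrac12\|\eta-\bar\eta\|^2$. The paper states the corollary without an explicit proof precisely because it is this bookkeeping exercise, and you have correctly noted the one hypothesis that must be carried over, namely weak connectivity of $\mathcal{G}$.
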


A necessary (and in case the graph is weakly connected necessary {\it and}
sufficient) condition for the inclusion $\im E \subset \im B $ is that
$\mathds{1}^TE =
0$. In its turn $\mathds{1}^TE =
0$ is equivalent to the fact that for every $\bar{d}$ the total inflow into the
network equals to the
total outflow). The condition $\mathds{1}^TE = 0$ also implies{\small
\begin{equation}
\mathds{1}^T\dot{x} = -\mathds{1}^TB\mu +
\mathds{1}^TE\bar{d}=0, \nonumber
\end{equation}
}implying (as in the case $d=0$) that $\mathds{1}^Tx$ is a {\it conserved
quantity} for the closed-loop
system (\ref{closedloop}).

\section{Results review for the case with flow constraints}\label{review}
In many cases of interest, the flows in the edges are constrained. Here we 
briefly review the main results in \cite{Wei2013,Weimtns2014} where we consider instead of (\ref{F-connection}) its constrained version {\small
\begin{equation}\label{F-connection-constrained}
 u=B\sat(\mu,\mu^-,\mu^+)
\end{equation}
}where $\mu^-, \mu^+\in\mathbb{R}^m$ 
are the lower and upper bounds for the flow constraints. 
For simplcity of exposition we only consider the PI controller (\ref{PI}) 
with $H_c(\cdot)=\frac{1}{2}\|\cdot\|_2^2$ and $R=I$.

As is shown in \cite{Wei2013,Weimtns2014}, without loss of generality we can assume
that the flow constraints satisfy $u^+\geq u^-\geq 0$ and the two equality signs do not hold at the same time. We call this kind of the constraints are compatible with the orientation. Furthermore, adding the disturbance 
satisfying the constrained version of the matching condition, i.e.,{\small
\begin{equation}\label{constrained-matching}
E \bar{d} = B\sat(\bar{\eta}; \mu^-,\mu^+),
\end{equation}
}is equivalent to translation of the constraints. It follows 
that, without loss of generality, we can focus on the closed-loop system without
disturbance{\small
\begin{equation}\label{constant_constraint_input}
\begin{array}{rcl}
\dot{x} & = & B \sat(-B^T\frac{\partial H}{\partial
x}(x)-\eta, \mu^-, \mu^+), \\[2mm]
\dot{\eta} & = & B^T\frac{\partial H}{\partial x}(x).
\end{array}
\end{equation}
}The main results about system (\ref{constant_constraint_input}) are summarized in
\begin{theorem}(\cite{Wei2013})
Consider the dynamical system
(\ref{constant_constraint_input}) with compatible flow constraints. Then for
any $\mu^- < \mu^+\in\mathbb{R}_+^m$ such that
$\cap_{i=1}^{m}[\mu^-_i,\mu^+_i]$ contains an open interval,
the trajectories of (\ref{constant_constraint_input}) converge to{\small
\begin{equation}
\begin{aligned}
 \mathcal{E}_{\mathrm{tot}} = &\{ (x,\eta) \mid \frac{\partial H}{\partial x}(x)
=\alpha \mathds{1}, \, \alpha \in \mathbb{R}, \\
& B\sat(-\eta, \mu^-,\mu^+)=0
\, \}
\end{aligned}
\end{equation}
}if and only if the graph is {\it strongly connected} and {\it
balanced}.
\end{theorem}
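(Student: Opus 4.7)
The plan is to establish sufficiency via LaSalle's invariance principle with a purpose-built Lyapunov function, and necessity by exhibiting failures. Pick $c\in\bigcap_i(\mu_i^-,\mu_i^+)$ (nonempty by hypothesis) and set $\bar\eta:=-c\mathds{1}_m$. Balancedness of the graph gives $B\mathds{1}_m=0$, hence $\sat(-\bar\eta;\mu^-,\mu^+)=c\mathds{1}_m\in\ker B$, so $(\alpha\mathds{1}_n,\bar\eta)\in\mathcal{E}_{\mathrm{tot}}$ for every $\alpha\in\mathbb{R}$. As storage function I propose
\[
V(x,\eta):=\tfrac12\|x\|^2+\sum_{i=1}^m\phi_i(\eta_i),\qquad \phi_i'(s):=c-\sat(-s;\mu_i^-,\mu_i^+),
\]
normalised by $\phi_i(\bar\eta_i)=0$. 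Each $\phi_i$ is convex (its derivative is nondecreasing) with unique minimum at $\bar\eta_i$, grows linearly to $+\infty$ in each saturated regime, so $V$ is nonnegative and radially unbounded.

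Next I would compute $\dot V$ along (\ref{constant_constraint_input}):
\[
\dot V=(B^Tx)^T\bigl[\sat(-B^Tx-\eta;\mu^-,\mu^+)-\sat(-\eta;\mu^-,\mu^+)\bigr]+c\,x^TB\mathds{1}_m,
\]
where the last term vanishes by balance. Setting $p:=-B^Tx-\eta$ and $q:=-\eta$ (so that $B^Tx=q-p$), this rewrites as $\dot V=-\sum_i(p_i-q_i)(\sat(p_i)-\sat(q_i))\leq 0$ by componentwise monotonicity of $\sat$, with equality at index $i$ iff either $(B^Tx)_i=0$, or $p_i$ and $q_i$ lie in a common saturated arm (both $\leq\mu_i^-$ or both $\geq\mu_i^+$).

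The central and most delicate step is the invariant-set analysis. Radial unboundedness of $V$ makes every trajectory bounded. On an invariant trajectory contained in $\{\dot V\equiv 0\}$ I would argue $B^Tx\equiv 0$ by contradiction: if $(B^Tx)_i\neq 0$ on some interval, the dissipation equality pins $p_i$ and $q_i$ to a common saturated arm throughout, confining $\eta_i$ to a half-line on which $\phi_i$ has constant nonzero slope; integrating $\dot\eta_i=(B^Tx)_i$ then drives $\phi_i(\eta_i(t))$, and hence $V$, out of every sublevel set, contradicting boundedness. Once $B^Tx\equiv 0$ is established, $\dot\eta=0$ renders $\eta$ constant, the term $\dot x=B\sat(-\eta;\mu^-,\mu^+)$ is a constant vector, and boundedness of $x$ yields $B\sat(-\eta;\mu^-,\mu^+)=0$. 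Since a weakly connected balanced digraph is Eulerian and hence strongly connected, $\ker B^T=\spa\mathds{1}_n$ and therefore $x=\alpha\mathds{1}_n$, placing the limit in $\mathcal{E}_{\mathrm{tot}}$.

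For the ``only if'' direction I would produce counterexamples: if the graph is unbalanced then $\mathds{1}_m\notin\ker B$, and shrinking $[\mu^-,\mu^+]$ around a fixed $c$ gives a compatible pair with $\ker B\cap[\mu^-,\mu^+]=\emptyset$, leaving $\mathcal{E}_{\mathrm{tot}}$ empty; if the graph is weakly disconnected, the per-component conservation of $\mathds{1}^Tx$ produces independent invariants that prevent common consensus for suitable initial data. Together with the equivalence ``weakly connected $+$ balanced $\Leftrightarrow$ strongly connected $+$ balanced'' (a balanced weakly connected digraph is Eulerian, hence strongly connected), this yields the converse.
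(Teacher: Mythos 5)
The paper states this theorem without proof (it is recalled from \cite{Wei2013} in the review section), so there is no in-paper argument to compare against line by line; the closest relative is the proof of the main theorem in Section \ref{proof}, which avoids the difficulty below entirely because its time-varying bounds are designed so that the full proportional dissipation $-\frac{\partial^T H}{\partial x}BB^T\frac{\partial H}{\partial x}$ survives saturation. Your setup is the right one for the fixed-constraint case: the storage function $V=\tfrac12\|x\|^2+\sum_i\phi_i(\eta_i)$ with $\phi_i'(s)=c-\sat(-s;\mu_i^-,\mu_i^+)$, the use of balancedness to kill the cross term, the incremental-monotonicity identity $\dot V=-\sum_i(p_i-q_i)(\sat(p_i)-\sat(q_i))\le 0$, and the two necessity counterexamples (shrinking the box around $c\mathds{1}_m$ off $\ker B$ when unbalanced; per-component conservation of $\mathds{1}^Tx$ when disconnected) are all correct.

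The genuine gap is in the invariant-set step. You argue that if $(B^Tx)_i\neq 0$ ``on some interval'' then $\phi_i(\eta_i)$ grows without bound and drives $V$ out of every sublevel set. This fails for two reasons. First, the set $\{t:(B^Tx(t))_i\neq 0\}$ is in general a union of bounded intervals on which the sign of $(B^Tx)_i$ and the active saturation arm may change, so no unbounded integration is available. Second, even where $\phi_i(\eta_i)$ does increase, $V$ does not: on the invariant set $\dot V\equiv 0$, so the other terms of $V$ compensate exactly, and no contradiction with sublevel sets arises. What actually closes the argument is a different observation that your dissipation identity already contains. On the invariant set, for every $i$ and every $t$ either $p_i=q_i$ or $\sat(p_i)=\sat(q_i)$, hence $\sat(p)\equiv\sat(q)$ and $\dot x=B\sat(-\eta)$; moreover, whenever $(B^Tx)_i\neq 0$ the value $-\eta_i$ lies in a flat arm of $\sat$, so $\tfrac{d}{dt}\sat(-\eta_i)=-\sat'(-\eta_i)(B^Tx)_i=0$ almost everywhere. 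Thus $\sat(-\eta(t))$ is \emph{constant} along invariant trajectories, $\dot x$ is a constant vector, and boundedness of $x$ (from radial unboundedness of $V$) forces $B\sat(-\eta)=0$; then $x$ is constant, boundedness of $\eta$ forces $B^Tx=0$, and weak connectivity gives $x=\alpha\mathds{1}_n$. With that replacement (and using $H(x)$ rather than $\tfrac12\|x\|^2$ if one keeps the general Hamiltonian of (\ref{constant_constraint_input})), your proof goes through.
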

 
For any network with given orientation and constraints on the edges, we can
define the \textit{interior point condition}.

\begin{definition}\label{interior point}
 (Interior Point Condition) Given a directed graph with arbitrary constraints
$[\mu^-, \mu^+]$, the network will be
said to satisfy the interior point condition if there exists a vector
$z\in[\mu^-, \mu^+]\cap \ker B$ such that the subgraph 
$\mathcal{G}_0=\{\mathcal{V},\mathcal{E}_0\}$ is weakly connected where 
{\small
\begin{equation}
\mathcal{E}_0(z; \mu^-, \mu^+) = \{ e_i \mid e_i\in\mathcal{E}, z_i\in 
(\mu^-_i,\mu^+_i)\}.\nonumber
\end{equation}
}

\end{definition}
\begin{theorem}\label{main}
 (\cite{Weimtns2014}) Consider the dynamical system
(\ref{constant_constraint_input}) defined on a
weakly connected graph. Then the trajectories will converge
to {\small
\begin{equation}
 \mathcal{E}_{\mathrm{tot}} = \{ (x,\eta) \mid \frac{\partial H}{\partial x}(x) =
\alpha \mathds{1}_n, \, B\sat(-\eta\,;u^-,u^+) = 0 \}.\nonumber
\end{equation}
}if and only if the network satisfies the interior point condition.
\end{theorem}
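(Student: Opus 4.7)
The strategy is Lyapunov--LaSalle for sufficiency and an obstruction argument for necessity. For sufficiency, the interior point condition supplies $z\in[\mu^-,\mu^+]\cap\ker B$ whose interior-edge subgraph $\mathcal{G}_0$ is weakly connected and spans $\mathcal{V}$. I would construct a controller state $\bar\eta$ satisfying $\sat(-\bar\eta,\mu^-,\mu^+)=z$, choosing $\bar\eta_i$ strictly outside $[-\mu^+_i,-\mu^-_i]$ whenever $z_i$ lies on the boundary of $[\mu^-_i,\mu^+_i]$, so that saturation is stably active at $\bar\eta$ on those edges. Fix a consensus value $\bar x=\alpha\mathds{1}$ with $\alpha n=\mathds{1}^T x(0)$ using the conserved quantity $\mathds{1}^T x$, and adopt as Lyapunov candidate the shifted port-Hamiltonian energy
\[
V(x,\eta)=H(x)-H(\bar x)-\tfrac{\partial^T H}{\partial x}(\bar x)(x-\bar x)+\tfrac12\|\eta-\bar\eta\|^2,
\]
which reduces to $\tfrac12\|x-\bar x\|^2+\tfrac12\|\eta-\bar\eta\|^2$ in the standard quadratic case and is the direct analog of $V_{\bar d}$ from Section~\ref{model}.

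I would compute $\dot V$ along trajectories using $B^T\mathds{1}=0$ and $Bz=0$, reducing the expression to a pairing between the saturation increment $v-z$ and the signal increment $\xi-\bar\xi$, where $\xi=-B^Ty-\eta$ and $\bar\xi=-\bar\eta$. The core inequality is monotonicity of $\sat$, namely $(v-z)^T(\xi-\bar\xi)\ge 0$, which together with the quadratic contribution $-\|\eta-\bar\eta\|^2$ and the deadzone bookkeeping for saturated edges should yield $\dot V\le 0$, with equality forcing $v=z$ on the interior edges and $B^T y=0$ on $\mathcal{G}_0$. Apply LaSalle's invariance principle: on the largest invariant subset of $\{\dot V=0\}$ the $\eta$-dynamics $\dot\eta=B^T y$ must vanish on interior edges, and weak connectedness of $\mathcal{G}_0$ then upgrades $B^Ty=0$ to $y=\alpha\mathds{1}$ globally. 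Combined with $Bv=0$ in the limit, this places the $\omega$-limit in $\mathcal{E}_{\mathrm{tot}}$.

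For necessity, if the interior point condition fails then either $[\mu^-,\mu^+]\cap\ker B$ is empty, in which case $\mathcal{E}_{\mathrm{tot}}$ itself is empty and convergence is vacuous, or every admissible $z$ yields a disconnected $\mathcal{G}_0(z)$. In the disconnected case, partition $\mathcal{V}$ into the weakly connected components of $\mathcal{G}_0$ and pick initial data $x(0)$ with mismatched arithmetic means on the different components; the only edges coupling distinct components are saturated and thus transport a flow that is rigid against small perturbations of $\eta$, so the component sums cannot equilibrate to a common value and the trajectory is trapped away from $\mathcal{E}_{\mathrm{tot}}$.

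The main obstacle is the LaSalle step: saturation states can switch repeatedly along a trajectory, so identifying the invariant subset of $\{\dot V=0\}$ requires tracking which edges remain in their interior regime asymptotically. The choice of $\bar\eta$ with strict saturation on boundary edges is what stabilises the saturation pattern near the equilibrium and makes the bilinear Lyapunov computation close up; weak connectedness of $\mathcal{G}_0$ is then exactly the graph-theoretic hypothesis needed to promote interior-edge consensus to global consensus, which explains why the interior point condition is both necessary and sufficient.
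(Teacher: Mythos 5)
A preliminary remark: this paper does not prove Theorem~\ref{main}; it is quoted from \cite{Weimtns2014} as background, so your attempt can only be compared against the technique used in that reference and against the analogous Lyapunov computation the paper carries out in Section~\ref{proof}. Your overall architecture (shifted storage function, monotonicity of $\sat$, LaSalle, obstruction argument for necessity) is the right family of ideas, but the specific Lyapunov function you propose does not work, and the step where you claim it closes is exactly where it breaks. With
$V(x,\eta)=H(x)-H(\bar x)-\frac{\partial^T H}{\partial x}(\bar x)(x-\bar x)+\frac12\|\eta-\bar\eta\|^2$
one gets, writing $w=B^T\frac{\partial H}{\partial x}(x)$ and $v=\sat(-w-\eta;\mu^-,\mu^+)$,
\begin{equation}
\dot V = w^Tv+(\eta-\bar\eta)^Tw ,\nonumber
\end{equation}
and there is no ``quadratic contribution $-\|\eta-\bar\eta\|^2$'' anywhere in this expression: $\frac{d}{dt}\frac12\|\eta-\bar\eta\|^2=(\eta-\bar\eta)^TB^Ty$ is a sign-indefinite cross term, and the monotonicity inequality $(v-z)^T(\xi-\bar\xi)\ge 0$ leaves an uncancelled residue $(\eta-\bar\eta)^T(w-v+z)$. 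This is not a bookkeeping issue but a genuine failure: on the two-vertex graph with edges $e_1:v_1\to v_2$, $e_2:v_2\to v_1$, constraints $[\mu^-,\mu^+]=[0,1]$ on both edges, $z=(\tfrac12,\tfrac12)$, $\bar\eta=(-\tfrac12,-\tfrac12)$, the state $y_1-y_2=1$, $\eta=(-10,0)$ gives $w=(-1,1)$, $v=(1,0)$ and $\dot V=-1+10=9>0$. The saturated PI loop is not passive with respect to the quadratic storage in $\eta$; the reference (and the standard absolute-stability trick) instead uses the Popov-type storage $\sum_i\int_{\bar\eta_i}^{\eta_i}\bigl(z_i-\sat(-s;\mu^-_i,\mu^+_i)\bigr)\,ds$, for which $\dot V=w^T\bigl(\sat(-w-\eta)-\sat(-\eta)\bigr)\le 0$ follows in one line from monotonicity of $\sat$. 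That substitution creates the next obligation you would have to discharge: this storage is only linearly coercive in $\eta_i$, and is \emph{not} proper in $\eta_i$ when $z_i$ lies on the boundary of $[\mu^-_i,\mu^+_i]$, so boundedness of trajectories (needed for LaSalle) requires a separate argument --- this, together with upgrading edgewise equality conditions to consensus, is where the interior point condition actually does its work, rather than through your device of pushing $\bar\eta_i$ strictly past the saturation threshold.

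Two smaller points. First, your necessity argument is only a heuristic: ``the saturated edges transport a flow that is rigid against small perturbations of $\eta$'' is not established, since the saturation value on a coupling edge is $\sat(-B^Ty-\eta)_j$ and both arguments evolve; one must actually exhibit an invariant region or a monotone quantity separating the component averages, and also handle the degenerate alternative where $\ker B\cap[\mu^-,\mu^+]$ is nonempty but every admissible $z$ disconnects $\mathcal{G}_0$ (note $\mathcal{E}_{\mathrm{tot}}$ is then still nonempty, so convergence is not vacuous). Second, the conservation law $\mathds{1}^T\dot x=0$ that you use to pin down $\alpha$ holds because $\mathds{1}^TB=0$ for any incidence matrix, not because of balancedness --- that part is fine, but for non-quadratic $H$ the consensus value is determined implicitly by $\sum_i(\partial H_i/\partial x_i)^{-1}(\alpha)=\mathds{1}^Tx(0)$, and you should verify solvability before anchoring the Bregman shift at $\bar x$.
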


\section{Constrained PI-controllers maintaing a lower bound for the state 
variables}\label{problem}

Although, as summarized in the previous sections, the PI controller with both 
unconstrained (\ref{F-connection}) and constrained flow connection 
(\ref{F-connection-constrained}) is successful in obtaining output agreement 
for the plant, it introduces oscillatory behavior which may cause the state 
variables $x$ become smaller than some given lower bounds. For certain
applications this may be undesirable or infeasible, as is illustrated by the
following example.

\begin{exmp}[Hydraulic network]\label{Hydraulic network}
Consider a hydraulic network, modeled as a directed graph with vertices (nodes)
corresponding to reservoirs, and edges (branches) corresponding to pipes.
Let $x_i$ be the volume of fluid stored at vertex $i$, and $\mu_j$
the flow through edge $j$. Then the mass balance of the network is
summarized as (\ref{plant}) and (\ref{F-connection}). Let $H(x)$ denote
the stored energy in the reservoirs (e.g., gravitational energy). For
cylindric reservoirs, $x_i=S_ih_i, H_i=\frac{1}{2}\rho S_igh_i^2=\frac{\rho
g}{2S_i}x_i^2$ where $S_i$ is the bottom area, $h_i$ is the
height of liquid of $i$th reservoir respectively, and $g$ is gravity
coefficient. Then $P_i :=
\frac{\partial H}{\partial x_i}(x)=\rho g h_i=\frac{\rho gx_i}{S_i}, i=1,
\ldots, n,$ are the {\it
pressures} at the vertices and the output of the plant. The $j^{\text{th}}$ element of the input to the controller which is given as in (\ref{P-connection}) is the pressure {\it difference}
$P_i - P_k$ across the $j^{\text{th}}$ edge.
The proportional part $\mu=-R\zeta=-RB^T\frac{\partial H}{\partial x}(x)$ of the PI controller (\ref{PI}) corresponds to adding {\it
damping} to the dynamics (proportional to
the pressure differences along the edges). The integral part of the controller
has the interpretation of adding {\it compressibility} to the compressible fluid
network dynamics. Using this emulated compressibility, the PI-controllers
(\ref{PI}) is able to regulate the fluid network to a output agreement situation where all pressures
$P_i$ are equal, irrespective of the constant inflow and outflow $\bar{d}$ satisfying
the matching condition (\ref{matching}). 
%Note that in this case the pressures $P_i$ are equal to each other if and only
%if the fluid levels $h_i$ are equal. 
However, since the PI controller (\ref{PI}) can introduce the oscillation which can make the state variables $x$ of the closed-loop (\ref{closedloop}) become negative. This is clearly infeasible.
\end{exmp}

This example motivates us to modify the flow connections to time varying 
upper and lower bounds, i.e.{\small
\begin{equation}\label{F-connection-timevarying}
 u(t)=B\sat(\mu(t);\mu^-(t),\mu^+(t))
\end{equation}
}such that the states variables $x$ remain greater 
or equal than a lower bound for all time (for instance, zero in Example 
\ref{Hydraulic network}), while the outputs of the plant $\frac{\partial 
H}{\partial x_i}$ still converges to consensus.

For the rest of the paper we focus on the closed-loop system 
(\ref{plant},\ref{PI},\ref{P-connection},\ref{F-connection-timevarying}), given as {\small
\begin{equation}\label{closedloop_timevarying_basic}
\begin{array}{rcl}
\dot{x} & = & B \sat(-RB^T\frac{\partial H}{\partial
x}(x)-\frac{\partial H_c}{\partial
\eta}(\eta), \mu^-(t), \mu^+(t)), \\[2mm]
\dot{\eta} & = & B^T\frac{\partial H}{\partial x}(x).
\end{array}
\end{equation}
}with in/outflows are zero 
(i.e., $\bar{d}=0$). Furthermore, we assume 
$H_c(\eta)\in\mathbb{C}^1$, $H(x)=\sum_{i=1}^nH_i(x_i)\in\mathbb{C}^2$
with $H_i(\cdot):\mathbb{R}\rightarrow
\mathbb{R}$ be strictly convex and $\arg \min H_i(x_i)=\gamma_i,
i=1,2,\ldots,n$, $u^-(t)$ and $u^+(t)$ are parameters to be designed such that $x(t)\geq
\gamma, \forall t\geq 0$. Notice that this is equivalent to keeping the output of the plant $\frac{\partial H}{\partial x}$ being non-negative. 
%Furthermore, we have a {\it decentralized}
%control scheme if $H$ is of the form $H(x)= H_1(x_1) + \ldots + H_n(x_n)$, in which case the
%$i^{\text{th}}$ input to the controller is given as $r_i$ times the difference of the component of
%$\frac{\partial H}{\partial x}(x)$ corresponding to the head vertex of the
%$i^{\text{th}}$ edge and the component of $\frac{\partial H}{\partial x}(x)$
%corresponding to its tail vertex. 

\begin{remark}
Note that when $H(x)= H_1(x_1) + \ldots + H_n(x_n)$ and $H_i$ are convex, there are many controllers which fulfill the control aim of driving output of the plant $\frac{\partial H}{\partial x}(x)$ to agreement while keeping it non-negative. For example the {\it proportional} controller {\small
\begin{equation}\label{P}
\mu=-R\zeta
\end{equation}
}with $R$ a positive diagonal matrix has the property that the evolution of the closed-loop system (\ref{plant},\ref{F-connection},\ref{P-connection},\ref{P}) with $\bar{d}=0$, i.e.
{\small
\[
\dot{x} = -BRB^T\frac{\partial H}{\partial x}(x)
\]
}remains in the set $\{x\mid \frac{\partial H}{\partial x}(x(t))\in\mathbb{R}^n_+, \forall t\geq 0\}$ whenever $\frac{\partial H}{\partial x}(x(0))\geq 0$. This directly follows from
the properties of the weighted Laplacian matrix $BRB^T$: whenever at a certain
moment $\frac{\partial H_i}{\partial x_i}(x_i(t))=0$, then $\dot{x}_i(t)=-\sum_j r_k(\frac{\partial H_i}{\partial x_i}(x_i(t))-\frac{\partial H_j}{\partial x_j}(x_j(t)))\geq 0$ where $r_k$ is the $k^{\text{th}}$ diagonal element of $R$ and $e_k\sim (v_i,v_j)$. However for the second order system (\ref{closedloop}), in order to achieve the control aim the flows on the edges need to be regulated.
\end{remark}

\begin{exmp}[Hydraulic network continued]\label{Hydraulic network continued}
In this example we want to show that instead of keeping $\frac{\partial H_i}{\partial x_i}>0, i=1,2,\ldots,n$, they can be above any value $\bar{y}_i$ satisfying $\bar{y}_i=\frac{\partial H_i}{\partial x_i}(\bar{x}_i)$ for some $\bar{x}_i$ for all time. In that case we just replace the Hamiltonian $H$ in the system (\ref{closedloop_timevarying_basic}) by the Bregman distance with respect to $\bar{x}_i$, i.e. $H'_i(x_i)=H_i(x_i)-\bar{y}_i (x_i-\bar{x}_i)-H(\bar{x}_i)$, then the design of parameters $u^-(t)$ and $u^+(t)$ as done in the next section will keep $\frac{\partial H_i}{\partial x_i}>\bar{y}_i$. For instance with the same setting as in Example \ref{Hydraulic network}, suppose we want to keep the pressure of each reservoir greater or equal than
a given value $\bar{P}_i=\rho g \bar{h}_i\geq 0, i=1,2,\ldots,n$, then we can {\it modify} the  Hamiltonian as $H_i(x_i)=\frac{\rho g}{2S_i}(x_i-S_i \bar{h}_i)^2$, which is the {\it extra} stored energy in the
$i$th reservoir compared to the stored energy at height $\bar{h}_i$. In this case $\frac{\partial
H}{\partial x_i}=\rho g(h_i-\bar{h}_i)$ which is the relative pressure with respect to the height $\bar{h}_i$.
\end{exmp}

\section{The design of the flow constraints}\label{design}

In this section we will design the parameters $\mu^-(t)$ and $\mu^+(t)$ in 
(\ref{closedloop_timevarying_basic}).
The only situation in which a state variable $x_i$  may become smaller than 
$\gamma_i$ is that at a certain time instant $t$, $x_i(t)=\gamma_i$ and 
$\dot{x}_i(t)<0$. The basic idea underlying the design of the time-varying flow 
constraints is to eliminate this situation by adding saturation on the flows in 
the edges in such a way that $\dot{x}_i(t)\geq 0$ whenever $x_i(t)=\gamma_i$.
For each time $t$ and each vertex $v_i$, the edges adjacent to it can be divided 
into two sets{\small
\begin{equation}
 \begin{aligned}
  f_{v_i}^{in}(t) & = \{e_j \in \mathcal{E} \mid B_{ij}\mu_j>0 \} \\
f_{v_i}^{out}(t) & = \{e_j \in \mathcal{E} \mid B_{ij}\mu_j<0\}.
 \end{aligned}
\end{equation}
}For each time $t$, the vertices of the network can be divided into the 
following 
subsets, referred to as {\it white, gray and black} (with the last category 
divided into two subsets) {\small
\begin{equation}
 \begin{aligned}
  \mathcal{V}^W(t) & = \{v_i\in\mathcal{V} \mid x_i(t)>\gamma_i \} \\
\mathcal{V}^G(t) & = \{ v_i\in\mathcal{V} \mid x_i(t)=\gamma_i \} \\
\mathcal{V}^{B1}(t) & = \{v_i\in\mathcal{V}^G \mid B(i,:)\mu(t)<0 \} \\
\mathcal{V}^{B2}(t) & = \{v_i\in\mathcal{V}^G \mid \exists
v_j\in\mathcal{V}^{B1} \textrm{ s.t. } f_{v_i}^{in}(t)\cap f_{v_j}^{out}(t) \neq
\emptyset  \}
 \end{aligned}\nonumber
\end{equation}
}where $B(i,:)$ is the $i$th row of $B$. Furthermore, we denote
$\mathcal{V}^B(t)=\mathcal{V}^{B1}(t)\cup\mathcal{V}^{B2}(t)$.

\begin{exmp}\label{intuitive_fork}
Let us consider a part of the network given as given in Fig.\ref{structure1}. 
This example shows that the states of the \textit{black} nodes can become negative. 
Indeed suppose that at time $t$ the state variable at $v_2$, i.e. $x_2(t)$, 
decreases to $\gamma_2$, while $\mu_2(t)+\mu_3(t)>\mu_1(t)\geq 0$, then 
$\dot{x}_2(t)<0$.
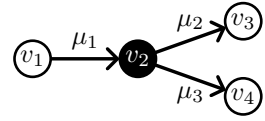
\begin{figure}[ht]
\begin{center}
\begin{tikzpicture}
\tikzstyle{EdgeStyle}    = [thin,double= black,
                            double distance = 0.5pt]
\useasboundingbox (0,0) rectangle (3cm,1.5cm);

\Vertex[style={minimum
size=1pt,shape=circle,inner sep=1pt},LabelOut=false,L=\hbox{$v_1$},x=0.1cm,y=0.8cm]{v1}
\Vertex[style={minimum
size=1pt,shape=circle,fill=black,text=white,inner sep=1pt},LabelOut=false,L=\hbox{$v_2$},x=1.5cm,y=0.8cm]{v2}
\Vertex[style={minimum
size=2pt,shape=circle,inner sep=1pt},LabelOut=false,L=\hbox{$v_3$},x=2.9cm,y=1.3cm]{v3}
\Vertex[style={minimum
size=2pt,shape=circle,inner sep=1pt},LabelOut=false,L=\hbox{$v_4$},x=2.9cm,y=0.3cm]{v4}
\draw
(v1) edge[->,>=angle 90,thin,double= black,double distance = 0.5pt]
node[above]{$\mu_1$} (v2)  %[above, near end]
(v2) edge[->,>=angle 90,thin,double= black,double distance = 0.5pt]
node[above]{$\mu_2$} (v3)
(v2) edge[->,>=angle 90,thin,double= black,double distance = 0.5pt]
node[below]{$\mu_3$} (v4);
\end{tikzpicture}
\caption{Explanation about how the \textit{black} vertices may exhibit negative state values, with $\mu_i$ the output of the controller on the $i$-th edge.}\label{structure1}
\end{center}
\end{figure}
\end{exmp}

Let us denote the set of outgoing edges of all vertices in 
$\mathcal{V}^B(t)$, i.e., 
$\cup_{v_i\in\mathcal{V}^B(t)}f_{v_i}^{out}$, as $\mathcal{E}^B_{out}(t)$. Along
the edges $e_l\in\mathcal{E}^B_{out}(t)$, a saturation $[-|\phi_l^*(t)|, 
|\phi_l^*(t)|]$ is imposed on the flow, while along the rest of the edges
there are no saturations where $\phi^*(t)\in\mathbb{R}^m$ 
is the optimal solution of the following optimization problem{\small
\begin{equation}\label{optimization}
\begin{aligned}
  \min_{\phi}&
\sum_{e_j\in\mathcal{E}^B_{out}(t)}\frac{1}{2|\mu_j(t)|}
\Big(\big(\phi_j-\mu_j(t)\big)^2+\phi_j^2\Big)\\
\textrm{s.t. } & B(i,:)\phi=0,\forall v_i\in\mathcal{V}^B(t), \\
  &\phi_k=\mu_k(t), \textrm{ if } 
e_k\in\mathcal{E}\setminus\mathcal{E}^B_{out}(t).
\end{aligned}
\end{equation}
}
Furthermore, let us denote {\small
\begin{equation}\label{upperbound}
\phi^+_j (t) = \left\{ \begin{array}{ll}
|\phi^*_j(t)| & \textrm{if $e_j\in \mathcal{E}^B_{out}(t)$}\\
+\infty & \textrm{else},
\end{array}
\, j=1,\ldots,m. \right.
\end{equation}
}then the closed-loop (\ref{closedloop_timevarying_basic}) can be written as 
{\small
\begin{equation}\label{closedloop_timevary_main}
\begin{array}{rcl}
\dot{x} & = & B \sat(-RB^T\frac{\partial H}{\partial
x}(x)-\frac{\partial H_c}{\partial
\eta}(\eta), -\phi^+(t), \phi^+(t)), \\[2mm]
\dot{\eta} & = & B^T\frac{\partial H}{\partial x}(x)
\end{array}
\end{equation}
}with the parameters $\mu^-(t)$ and $\mu^+(t)$ in 
(\ref{closedloop_timevarying_basic}) being chosen as $-\phi^+(t)$ and $ 
\phi^+(t)$ respectively.

\begin{exmp}
Continuing Example \ref{intuitive_fork}, suppose at time $t$, the flows 
are subject to $\mu_2(t)+\mu_3(t)>\mu_1(t)\geq 0$ and $x_2(t)=\gamma_2$. Then
the above control protocol will set $ \dot{x}_2=\mu_1(t) + 
\sum_{i=2,3}\sat(\mu_i(t),-\phi^+_i(t),\phi^+_i(t))$
where
$\phi^+_i(t)=|\frac{\mu_1(t)\mu_i(t)}{\sum_{i=2,3}\mu_i(t)}|, i=2,3.$
\end{exmp}

Furthermore, the solution of the optimization problem (\ref{optimization}) 
can be seen as the limit of the following algorithm.

{\bf Algorithm}: {\it Initialization}: at time $t$ when there are grey nodes in the
network, set the initial value $\phi^0=\mu(t)\in\mathbb{R}^m$. 
\\
{\it Step} $k$: Let
$\phi^{k-1}$ be the value from the previous step $k-1$. If there exists a node 
$i$ such
that $B(i,:)\phi^{k-1}< 0$, then {\small
\begin{equation}
\phi^k_j  = \left\{ \begin{array}{ll}
\frac{\sum_{e_j\in f^{in}_{v_i}}|\phi^{k-1}_j|}{\sum_{e_j\in
f^{out}_{v_i}}|\phi^{k-1}_j|}\phi^{k-1}_j & \textrm{if $e_j\in f^{out}_{v_i}$}\\
\phi^{k-1}_j & \textrm{else},
\end{array}
\, j=1,\ldots,m. \right.
\end{equation}
}This algorithm is converging since in every iteration the absolute values of the
flows are non-increasing. It can be proved that $\lim_{k\rightarrow 
\infty}\phi^{k}(t)=\phi^*(t)$. However the proof is omitted due to lack of space.  

\begin{exmp}\label{intuitive_circle}
In this example, we consider the structure of the network as given in Fig.
\ref{structure2}. Suppose at time $t$, $x_1(t)>\gamma_1,
x_2(t)=\gamma_2, x_3(t)=\gamma_3$ and the output of controller (\ref{PI}) is
$\mu(t)=[1,3,1,2]^T$, then $\mathcal{V}^B(t)=\{v_2,v_3\}$. By 
using the algorithm, the flows on $f_2^{out}(t)\cup 
f_3^{out}(t)=\{e_2,e_3,e_4\}$, are
saturated to the values $\frac{3}{2},\frac{1}{2}$ and $1$, respectively. Furthermore, it can
be verified that $[\frac{3}{2},\frac{1}{2},1]^T$ is the solution of
optimization problem (\ref{optimization}).
 
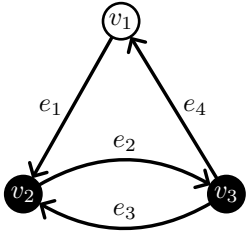
\begin{figure}[ht]
\begin{center}
\begin{tikzpicture}
\tikzstyle{EdgeStyle}    = [thin,double= black,
                            double distance = 0.5pt]
\useasboundingbox (0,0) rectangle (3cm,3cm);

\Vertex[style={minimum
size=0.15cm,shape=circle,fill=white,text=black, inner sep=1pt},LabelOut=false,L=\hbox{$v_1$}, x=1.5cm , y=2.8cm] {v1}
\Vertex[style={minimum
size=0.15cm,shape=circle,fill=black,text=white,inner sep=1pt},LabelOut=false,L=\hbox{$v_2$},x=0.2cm,y=0.5cm]{v2}
\Vertex[style={minimum
size=0.15cm,shape=circle,fill=black,text=white,inner sep=1pt},LabelOut=false,L=\hbox{$v_3$},x=2.9cm,y=0.5cm]{v3}
\draw
(v1) edge[->,>=angle 90,thin,double= black,double distance = 0.5pt]
node[left]{$e_1$} (v2)
(v2) edge[->,>=angle 90,bend left, thin,double= black,double distance = 0.5pt]
node[above]{$e_2$} (v3)
(v3) edge[->,>=angle 90,bend left, thin,double= black,double distance = 0.5pt]
node[above]{$e_3$} (v2)
(v3) edge[->,>=angle 90,thin,double= black,double distance = 0.5pt]
node[right]{$e_4$} (v1);
\end{tikzpicture}
\caption{Network of Example $\ref{intuitive_circle}$}\label{structure2}
\end{center}
\end{figure}
\end{exmp}

% !! All the out edge of white nodes are not saturated. All the out edges of
% black nodes are saturated until the node turns to be white.   Foe each edge,
%at  time t, it is either the out edge of white node or black node, it will be
% saturated for later case.  When a node turn white from black, all the out
%edges  will remove the saturation bounds.  It can be proved of stability is
%still valid.

\section{Stability analysis}\label{proof}

In this section we will prove the stability of the system (\ref{closedloop_timevary_main}), and its convergence to consensus.

Since the right-hand-side of the system (\ref{closedloop_timevary_main}) is
discontinuous, we will consider Filippov solutions. The notations are taken
from \cite{cortes2008}.

\begin{definition}(\cite{cortes2008})
Let $\mathfrak{B}(\mathbb{R}^d)$ denote the collection of subsets of
$\mathbb{R}^d$. For $X:\mathbb{R}^d\rightarrow\mathbb{R}^d$, define the
\textit{Filippov set-valued map} $F[X]:\mathbb{R}^d\rightarrow
\mathfrak{B}(\mathbb{R}^d)$ as{\small
\begin{equation}
 F[X](x)\triangleq
\bigcap_{\delta>0}\bigcap_{\mu(S)=0}\overline{co}\{X(B(x,\delta)\backslash S)\}
\end{equation}}
\end{definition}

\begin{definition}
 A Filippov solution of $\dot{x}(t)=X(x(t))$ on $[0,t_1]\subset\mathbb{R}$ is
an absolutely continuous map $x:[0,t_1]\rightarrow\mathbb{R}^d$ that satisfies 
{\small
\begin{equation}
 \dot{x}(t)\in F[X](x)
\end{equation}
}for almost all $t\in[0,t_1]$.
\end{definition}

Here are two useful facts about computing the Filippov set-valued map.
\begin{proposition}\label{filippov properties}(\cite{cortes2008})
{\it Product Rule}: If $X_1:\mathbb{R}^d\rightarrow\mathbb{R}^d$ and
$X_2:\mathbb{R}^d\rightarrow\mathbb{R}^n$ are locally bounded at
$x\in\mathbb{R}^d$, then{\small
\begin{equation}
 F[(X_1,X_2)^T](x)\subseteq F[X_1](x)\times F[X_2](x).
\end{equation}
}Moreover, if either $X_1$ or $X_2$ is continuous at $x$, then equality holds.

{\it Matrix Transformation Rule}: If $X:\mathbb{R}^d\rightarrow\mathbb{R}^m$ is
locally bounded at $x\in\mathbb{R}^d$ and
$Z:\mathbb{R}^d\rightarrow\mathbb{R}^{d\times m}$ is continuous at
$x\in\mathbb{R}^d$, then {\small
\begin{equation}
 F[ZX](x)=Z(x)F[X](x).
\end{equation}}
\end{proposition}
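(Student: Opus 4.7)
The plan is to unfold the definition
\[
F[X](x) = \bigcap_{\delta>0}\bigcap_{\mu(S)=0}\overline{co}\{X(B(x,\delta)\setminus S)\}
\]
in both parts and exploit how closed convex hull, Cartesian products, and linear maps interact. Both claims reduce to inclusions at the level of a single pair $(\delta,S)$, which are then preserved by intersecting.

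For the product rule, I would fix $\delta>0$ and a null set $S$ and start from the elementary set inclusion
\[
(X_1,X_2)^T(B(x,\delta)\setminus S) \subseteq X_1(B(x,\delta)\setminus S) \times X_2(B(x,\delta)\setminus S).
\]
Local boundedness of $X_1$ and $X_2$ makes both factors on the right bounded, so the standard identity $\overline{co}(A\times B) = \overline{co}(A)\times\overline{co}(B)$ for bounded sets applies; intersecting over $(\delta,S)$ gives the first inclusion. For the equality when, say, $X_1$ is continuous at $x$, I would use the fact that then $F[X_1](x)=\{X_1(x)\}$, so I only need to show $\{X_1(x)\}\times F[X_2](x) \subseteq F[(X_1,X_2)^T](x)$. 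Given $v\in F[X_2](x)$ and $(\delta,S)$, approximate $v$ by convex combinations of values $X_2(y_i)$ with $y_i\in B(x,\delta)\setminus S$; continuity of $X_1$ forces the corresponding $X_1(y_i)$ to approach $X_1(x)$ uniformly as $\delta\to 0$, so the paired combinations converge to $(X_1(x),v)$, which puts it in every $\overline{co}\{(X_1,X_2)^T(B(x,\delta)\setminus S)\}$.

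For the matrix transformation rule, the crucial step is to slide the continuous matrix $Z$ outside the Filippov regularization. Given $\epsilon>0$, continuity of $Z$ at $x$ yields $\delta>0$ with $\|Z(y)-Z(x)\|<\epsilon$ on $B(x,\delta)$, and local boundedness of $X$ gives $\|X(y)\|\le M$ on (a possibly smaller) ball. Writing $Z(y)X(y) = Z(x)X(y) + (Z(y)-Z(x))X(y)$, the error term is bounded by $\epsilon M$ uniformly in $y\in B(x,\delta)\setminus S$. Since a linear map commutes with closed convex hull on bounded sets, $\overline{co}\{Z(x)X(B(x,\delta)\setminus S)\} = Z(x)\overline{co}\{X(B(x,\delta)\setminus S)\}$. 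The two sets $\overline{co}\{ZX(B(x,\delta)\setminus S)\}$ and $Z(x)\overline{co}\{X(B(x,\delta)\setminus S)\}$ therefore lie within Hausdorff distance $\epsilon M$ of one another. Intersecting over $\delta$ (which lets $\epsilon\to 0$) and over all null sets $S$ collapses the error and produces the equality $F[ZX](x) = Z(x)F[X](x)$.

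The main obstacle is essentially bookkeeping: verifying that $\overline{co}$ behaves well under Cartesian products and under continuous linear images of bounded sets, and keeping the quantitative estimates uniform as one takes the double intersection over $\delta$ and the null sets $S$. The continuity hypotheses on $X_1$ (or $X_2$) in the first statement and on $Z$ in the second are precisely what provide the uniformity needed to upgrade the trivial one-sided inclusion to an equality; without them, only the inclusion survives, which explains the asymmetry between the two parts of the proposition.
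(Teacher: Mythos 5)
The paper does not prove this proposition at all: it is quoted verbatim from \cite{cortes2008} (which in turn defers to the Paden--Sastry calculus for Filippov maps), so there is no in-paper argument to compare yours against. Judged on its own, your sketch follows the standard route and is essentially correct: the one-sided product inclusion from $(X_1,X_2)(B(x,\delta)\setminus S)\subseteq X_1(\cdot)\times X_2(\cdot)$ together with $\overline{co}(A\times B)=\overline{co}(A)\times\overline{co}(B)$, and the perturbation estimate $\|Z(y)X(y)-Z(x)X(y)\|\leq \epsilon M$ for the transformation rule, are exactly the right mechanisms. Two steps deserve to be made explicit rather than asserted. First, you use the consistency property $F[X_1](x)=\{X_1(x)\}$ for $X_1$ continuous at $x$; this is itself a (short) lemma, not part of the definition. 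Second, ``intersecting over $\delta$ and $S$ collapses the error'' hides the only genuinely delicate point of the transformation rule: from $w\in Z(x)\,\overline{co}\{X(B_\delta\setminus S)\}+\epsilon(\delta)M\bar{B}$ for all $\delta$ and $S$ you must extract $w\in Z(x)F[X](x)$, and since $Z(x)$ need not be injective the intersection of the images is not automatically the image of the intersection. This is rescued by noting that the sets $L^{-1}(w)\cap\overline{co}\{X(B_\delta\setminus S)\}$ form a downward-directed family of nonempty compact sets (local boundedness gives compactness, and the null sets are closed under finite unions), hence have nonempty intersection. With those two points filled in, your argument is complete and matches the standard proof of this calculus.
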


\begin{theorem}
Consider the system (\ref{closedloop_timevary_main}) on the graph $\mathcal{G}$ in closed loop
with the saturation bounds as given in (\ref{upperbound}). Assume that
$H=\sum_{i=1}^n H_i(x_i)\in\mathbb{C}^2$ and $H_c\in\mathbb{C}^1$ are positive definitive and
radially unbounded. Furthermore $H_i$ are strictly convex with
$\arg\min_{x\in\mathbb{R}^n}H(x)=\gamma\in\mathbb{R}^n$. Then

(i) $x(t)\geqslant \gamma$ for all $t> 0$ if $x(0)\geq
\gamma$;

(ii) the trajectories of the closed-loop system
(\ref{closedloop_timevary_main}) will converge to an element of the load
balancing set{\small
\begin{equation}
\mathcal{E}_{\mathrm{tot}} = \{ (x,\eta) \mid \frac{\partial H}{\partial x}(x) =
\alpha \mathds{1}, \, \alpha \in \mathbb{R}^+, \, B\frac{\partial H_c}{\partial
\eta}(\eta)=0\, \}.\nonumber
\end{equation}
}if and only if $\mathcal{G}$ is weakly connected.
\end{theorem}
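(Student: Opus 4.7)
The plan is to prove the two parts separately, with part (i) relying on the geometric design of the saturation bounds and part (ii) on a Lyapunov--LaSalle argument adapted to Filippov solutions.

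For part (i), I aim to establish forward invariance of $\mathcal{S}=\{x:x\geqslant\gamma\}$ under the Filippov dynamics of (\ref{closedloop_timevary_main}). A Nagumo-type argument reduces this to showing that, at any time $t$ and any vertex $v_i$ with $x_i(t)=\gamma_i$, every element of the Filippov set-valued map of $\dot x_i(t)$ is nonnegative. For $v_i\in\mathcal{V}^B(t)$ this follows from the constraint $B(i,:)\phi^*(t)=0$ in (\ref{optimization}), combined with the fact that the iterative algorithm produces $\phi^*$ with $\sign(\phi_j^*)=\sign(\mu_j)$ and $|\phi_j^*|\leqslant|\mu_j|$ on $\mathcal{E}^B_{out}(t)$; hence the saturated flow equals $\phi_j^*$ on those edges and $\mu_j$ elsewhere, so $B(i,:)s=B(i,:)\phi^*=0$. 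For $v_i\in\mathcal{V}^G\setminus\mathcal{V}^B$, one first notes that $B(i,:)\mu(t)\geqslant 0$ (else $v_i$ would lie in $\mathcal{V}^{B1}$) and then checks that any incoming edge originating in $\mathcal{V}^B$ has its flow reduced only by what is needed to enforce its tail's balance, so that $B(i,:)s\geqslant 0$ persists.

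For part (ii), I use the Lyapunov candidate $V(x,\eta)=H(x)+H_c(\eta)$, which is positive definite and radially unbounded by hypothesis. A direct computation with $\dot x=Bs$ and $\dot\eta=B^Ty$ yields
\[
\dot V = (B^Ty)^T(s+w) = -v^T R v + v^T(s-\tilde u),
\]
where $v=B^Ty$, $w=\partial H_c/\partial\eta$, $\tilde u=-Rv-w$, and I used $\tilde u+w=-Rv$. The key observation is that the cross term $v^T(s-\tilde u)$ is always nonpositive. On edges with inactive saturation, $s_j=\tilde u_j$ and the contribution vanishes. On any edge $e_j\in\mathcal{E}^B_{out}(t)$ with active saturation, the tail vertex $v_a$ lies in $\mathcal{V}^B\subset\mathcal{V}^G$, so $x_a=\gamma_a$ and hence $y_a=0$; by part (i) $y\geqslant 0$, so $v_j=y_b-y_a=y_b\geqslant 0$ at the head $v_b$. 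Moreover, on $\mathcal{E}^B_{out}(t)$ we have $\tilde u_j=\mu_j>0$ and $s_j=|\phi_j^*|\in[0,\tilde u_j]$, so $s_j-\tilde u_j\leqslant 0$ and the product is nonpositive. Hence $\dot V\leqslant -v^T R v\leqslant 0$ along Filippov solutions.

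Convergence then follows from LaSalle's invariance principle for differential inclusions, in the framework of \cite{cortes2008}. The largest weakly invariant set in $\{\dot V=0\}$ satisfies $B^Ty=0$, which under weak connectivity of $\mathcal{G}$ is equivalent to $y=\alpha\mathds{1}$ for some scalar $\alpha$; since $y\geqslant 0$, necessarily $\alpha\in\mathbb{R}^+$. On this set $\dot\eta=0$, and strict convexity of $H$ with $y$ constant forces $\dot x=Bs=0$. A short case split completes the argument: if $\alpha>0$ then $\mathcal{V}^G(t)=\emptyset$, so $\phi^+\equiv+\infty$, $s=-w$, and $Bw=0$ is immediate; if $\alpha=0$ then $x\equiv\gamma$, and $Bs=0$ together with the optimization constraint $B(i,:)\phi^*=0$ at every vertex reduces to $Bw=0$ as well. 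The ``only if'' direction is standard: if $\mathcal{G}$ is not weakly connected, $\ker B^T$ strictly contains $\spa\mathds{1}$, and steady states with $y$ constant on each weakly connected component but not globally equal are admissible. The main obstacle I anticipate is the combinatorial bookkeeping in part (i) for $v_i\in\mathcal{V}^G\setminus\mathcal{V}^B$ and the matching $\alpha=0$ subcase in LaSalle, both of which rely on the convergence and structural properties of the iterative flow algorithm whose full proof is deferred in the paper.
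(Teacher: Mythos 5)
Your proposal follows essentially the same route as the paper: part (i) from the balance constraints $B(i,:)\phi^*=0$ built into the optimization (\ref{optimization}), and part (ii) via the Lyapunov function $V=H+H_c$ for the Filippov dynamics, the observation that on a saturated edge the upstream vertex is gray so $y$ vanishes there and $\sign(B_i^T y)=\sign(\mu_i)$, the resulting bound $\dot V\leqslant -v^TRv$, and LaSalle's invariance principle. Your reorganization of the dissipation estimate as $v^T(s-\tilde u)\leqslant 0$ and your explicit treatment of the $\alpha=0$ subcase are only cosmetic refinements of the paper's per-edge convex-combination computation.
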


\begin{proof}
(i) It can be verified from the form of optimization problem
(\ref{optimization}) which grantee that $\dot{x}_i(t)\geq 0$ when
$x_i(t)=\gamma_i.$

(ii) \textit{Sufficiency}. 
First by using Proposition \ref{filippov properties}, the differential equations 
(\ref{closedloop_timevary_main}) are 
replaced by the differential inclusion{\small
\begin{equation}
\begin{aligned}
 \begin{bmatrix}
  \dot{x}(t) \\ \dot{\eta}(t)
 \end{bmatrix}
& \in F\Bigg[ 
\begin{bmatrix}
 B \sat(\mu(t),-\phi^+(t),\phi^+(t)) \\
B^T\frac{\partial H}{\partial x}(x)
\end{bmatrix}
\Bigg] \\
& = \begin{bmatrix}
     BF\Big[\sat(\mu(t),-\phi^+(t),\phi^+(t))\Big] \\
B^T\frac{\partial H}{\partial x}(x)
    \end{bmatrix}\\
&\triangleq F(x,\eta)
\end{aligned}
\end{equation}
}where the equality is implied by Proposition \ref{filippov properties}. Notice
that the set-valued map $F(x,\eta)$ is locally bounded and its values are nonempty, compact and convex
sets. Furthermore, for each $t\in\mathbb{R}, (x,\eta)\rightarrow F(x,\eta)$ is
upper semi-continuous.

Take as Lyapunov function the Hamiltonian function {\small
\begin{equation}
V(x,\eta) := H(x) + H_c(\eta),
\end{equation}
}which is differentiable. Then the set-valued Lie derivative 
$\tilde{\mathcal{L}}_F
V:\mathbb{R}^{n+m}\rightarrow\mathfrak{B}(\mathbb{R})$ of
$V$ with respect to $F$ at $(x,\eta)$ is defined as {\small
\begin{equation}
 \begin{aligned}
  \tilde{\mathcal{L}}_F V & = \{(\nabla V)^T\omega \mid
\omega\in F(x,\eta)\} \\
& =  \frac{\partial^T H}{\partial 
x}(x)BF\Big[\sat(\mu(t),-\phi^+(t),\phi^+(t))\Big] \\
& + \frac{\partial^T H}{\partial x}(x)B\frac{\partial
H_c}{\partial \eta}(\eta)
 \end{aligned}
\end{equation}
}For the $i$-th edge, the Filippov set-valued map is
given as {\small
\begin{equation}
\begin{aligned}
 & F\big[\sat(\mu_i(t),-\phi^+_i(t),\phi^+_i(t))\big] \\
 \subset & \left\{ \begin{array}{ll}
[0, \mu_i(t)] & e_i\in\mathcal{E}^B_{out}(t)\wedge\mu_i(t)>0,\\
\relax[\mu_i(t), 0\relax] & e_i\in\mathcal{E}^B_{out}(t)
\wedge\mu_i(t)<0,\\
\{\mu_i(t)\} & \textrm{else},
\end{array}
\, i=1,\ldots,m. \right.
\end{aligned}
\end{equation}
}

For the $i$-th edge of $\mathcal{G}$ on which $\phi^+_i(t)=+\infty$, i.e.
$e_i\in \mathcal{E}\setminus\mathcal{E}^B_{out}(t)$, we have {\small
\begin{equation}
\begin{aligned}
&\frac{\partial^T H}{\partial x}(x(t))B_i 
F\big[\sat(\mu_i(t),-\phi^+_i(t),\phi^+_i(t))\big]\\
& + \frac{\partial^T H}{\partial x}(x(t))B_i\frac{\partial H_{c}}{\partial
\eta_i}(\eta(t)) \\
= & -\frac{\partial^T H}{\partial x}(x(t))B_iB_i^T\frac{\partial H}{\partial x}(x(t))
\end{aligned}
\end{equation}
}where $B_i$ is the $i$-th column of $B$.

For the $i$-th edge on which $\phi^+_i(t)<+\infty$, i.e.
$e_i\in\mathcal{E}^B_{out}(t)$, we have that 
$\forall \omega_i\in F \big[\sat(-\mu_i(t),-\phi^+_i(t),\phi^+_i(t))\big]$; 
which can be
written as{\small
\begin{equation}
\begin{aligned}
 \omega_i = & (1-\kappa_i)0+\kappa_i (\mu_i(t)),\\
&\textrm{for some } \kappa_i \in  [0,1],
\end{aligned}
\end{equation}
}This implies that {\small
\begin{equation}
\begin{aligned}
&\frac{\partial^T H}{\partial x}(x)B_i F\big[ 
\sat(\mu_i(t),-\phi^+_i(t),\phi^+_i(t))\big]\\
& + \frac{\partial^T H}{\partial
x}(x)B_i\frac{\partial H_{c}}{\partial
\eta_i}(\eta) \\
= & \{-\kappa_i \frac{\partial^T H}{\partial x}(x)B_iB_i^T\frac{\partial
H}{\partial x}(x) \\
& +(1-\kappa_i)\frac{\partial^T H}{\partial
x}(x)B_i\frac{\partial H_{c}}{\partial
\eta_i}(\eta) \mid \kappa_i\in
[0,1] \}
\end{aligned}
\end{equation}
}Furthermore, when $\eta^+_i(t)<+\infty$, we have either

\textbullet \  $B_i^T\frac{\partial H}{\partial x}(x)\geqslant 0$ and
$-B_i^T\frac{\partial H}{\partial x}(x)-\frac{\partial H_c}{\partial
\eta_i}(\eta)>0$ which implies
$\frac{\partial^T H}{\partial
x}(x)B_i\frac{\partial H_c}{\partial \eta_{i}}(\eta)\leqslant-\frac{\partial^T H}{\partial
x}(x)B_iB_i^T\frac{\partial H}{\partial x}(x)$ or

\textbullet \  $B_i^T\frac{\partial H}{\partial x}(x)\leqslant 0$ and
$-B_i^T\frac{\partial H}{\partial x}(x)-\frac{\partial H_c}{\partial
\eta_{i}}(\eta)<0$ which implies
$\frac{\partial^T H}{\partial
x}(x)B_i\frac{\partial H_c}{\partial \eta_{i}}(\eta)\leqslant-\frac{\partial^T H}{\partial
x}(x)B_iB_i^T\frac{\partial H}{\partial x}(x)$ again.

So far, we can conclude that {\small
\begin{equation}
 \begin{aligned}
 & \frac{\partial^T
H}{\partial
x}(x)B_i\big( F\big[\sat(\mu_i(t),-\phi^+_i(t),\phi^+_i(t))\big]
+\frac{\partial H_{c}}{\partial
\eta_i}(\eta)\big) \\
\leqslant & -\frac{\partial^T H}{\partial
x}(x)B_iB_i^T\frac{\partial H}{\partial x}(x),
 \end{aligned}
\end{equation}
}i.e. $\max \tilde{\mathcal{L}}_F V_{\bar{d}}(x,\eta)\leqslant -\frac{\partial^T
H}{\partial x}(x)BB^T\frac{\partial H}{\partial x}(x)$.

By LaSalle's Invariance principle, the trajectories will converge to the largest invariant
set, denoted as $\mathcal{I}$, within the set where $\{(x,\eta)\mid \dot{V}=0\}$, i.e.
$\{(x,\eta)\mid B^T\frac{\partial H}{\partial x}(x)=0\}$. In
$\mathcal{I}$ we have {\small
\begin{equation}
B^T\frac{\partial^2 H}{\partial
x^2}B\sat(-\frac{\partial H_c}{\partial \eta}(\eta(t)), -\phi^+(t), \phi^+(t))=0 
\end{equation}
}which implies that $x$ remains at a constant value, denoted by $\nu$, in 
$\mathcal{I}$
and $\frac{\partial H}{\partial x}(\nu)=\alpha\mathds{1}.$ Furthermore, in view 
of $\nu\geq \gamma$ and the convexity of $H$ we can prove that $\alpha>0$. 
By the optimal control protocol given in the previous section, we have that all the vertices
will be white for large enough $t$, which implies at steady state
$B\frac{\partial H_c}{\partial
\eta}(\eta) = 0$. This concludes the proof.

\textit{Necessity}. If the graph is not weakly connected then the above
analysis will hold on every connected component, and the common value $\alpha$
will be different for different components. 
\end{proof}

\begin{exmp}[Hydraulic network continued]\label{Hydraulic network simulation}
In this example we show the simulation results of the hydraulic network defined on
the graph given in Figure \ref{network example}, with flow constraints given as solution
of the optimization problem (\ref{optimization}). The values of the parameters are taken as
$S_i=1m^2, i=1,\cdots,5, \rho=1 kg/m^3, \gamma=0$ and
$[x(0),x_c(0)]=[0,0.5,1,2,0,5,9,3,0,-1,-2,-4]$. In Figure
\ref{simulation1}, it can be seen that the volume of each reservoir
is kept nonnegative for all times. Furthermore the pressures of
reservoirs converge to a common value (consensus). 

\begin{figure}[ht]
\begin{center}
\begin{tikzpicture}
\tikzstyle{EdgeStyle}    = [thin,double= black,
                            double distance = 0.5pt]
\useasboundingbox (0,0) rectangle (4cm,3.5cm);
\tikzstyle{VertexStyle} = [shading         = ball,
                           ball color      = white!100!white,
                           minimum size = 20pt,%
                           inner sep       = 1pt,]
\Vertex[style={minimum
size=0.2cm,shape=circle},LabelOut=false,L=\hbox{$1$},x=1.5cm,y=3cm]{v1}
\Vertex[style={minimum
size=0.2cm,shape=circle},LabelOut=false,L=\hbox{$2$},x=0cm,y=1.5cm]{v2}
\Vertex[style={minimum
size=0.2cm,shape=circle},LabelOut=false,L=\hbox{$3$},x=1.5cm,y=0cm]{v3}
\Vertex[style={minimum
size=0.2cm,shape=circle},LabelOut=false,L=\hbox{$4$},x=3cm,y=1.5cm]{v4}
\Vertex[style={minimum
size=0.2cm,shape=circle},LabelOut=false,L=\hbox{$5$},x=0cm,y=3cm]{v5}
\draw
(v1) edge[->,>=angle 90,thin,double= black,double distance = 0.5pt]
node[right]{$e_1$} (v2)
(v2) edge[->,>=angle 90,thin,double= black,double distance = 0.5pt]
node[left]{$e_2$} (v3)
(v3) edge[->,>=angle 90,thin,double= black,double distance = 0.5pt]
node[right]{$e_3$} (v1)
(v1) edge[->,>=angle 90,thin,double= black,double distance = 0.5pt]
node[right]{$e_4$} (v4)
(v4) edge[->,>=angle 90,thin,double= black,double distance = 0.5pt]
node[right]{$e_5$} (v3)
(v1) edge[->,>=angle 90,thin,double= black,double distance = 0.5pt]
node[above]{$e_6$} (v5)
(v5) edge[->,>=angle 90,thin,double= black,double distance = 0.5pt]
node[left]{$e_7$} (v2);
\end{tikzpicture}
\caption{Network structure of Example
$\ref{Hydraulic network simulation}$}\label{network example}
\end{center}
\end{figure}
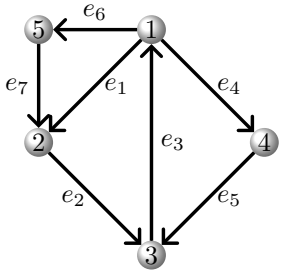

\begin{figure}[htbp]
 \centering
\includegraphics[width=0.45\textwidth]{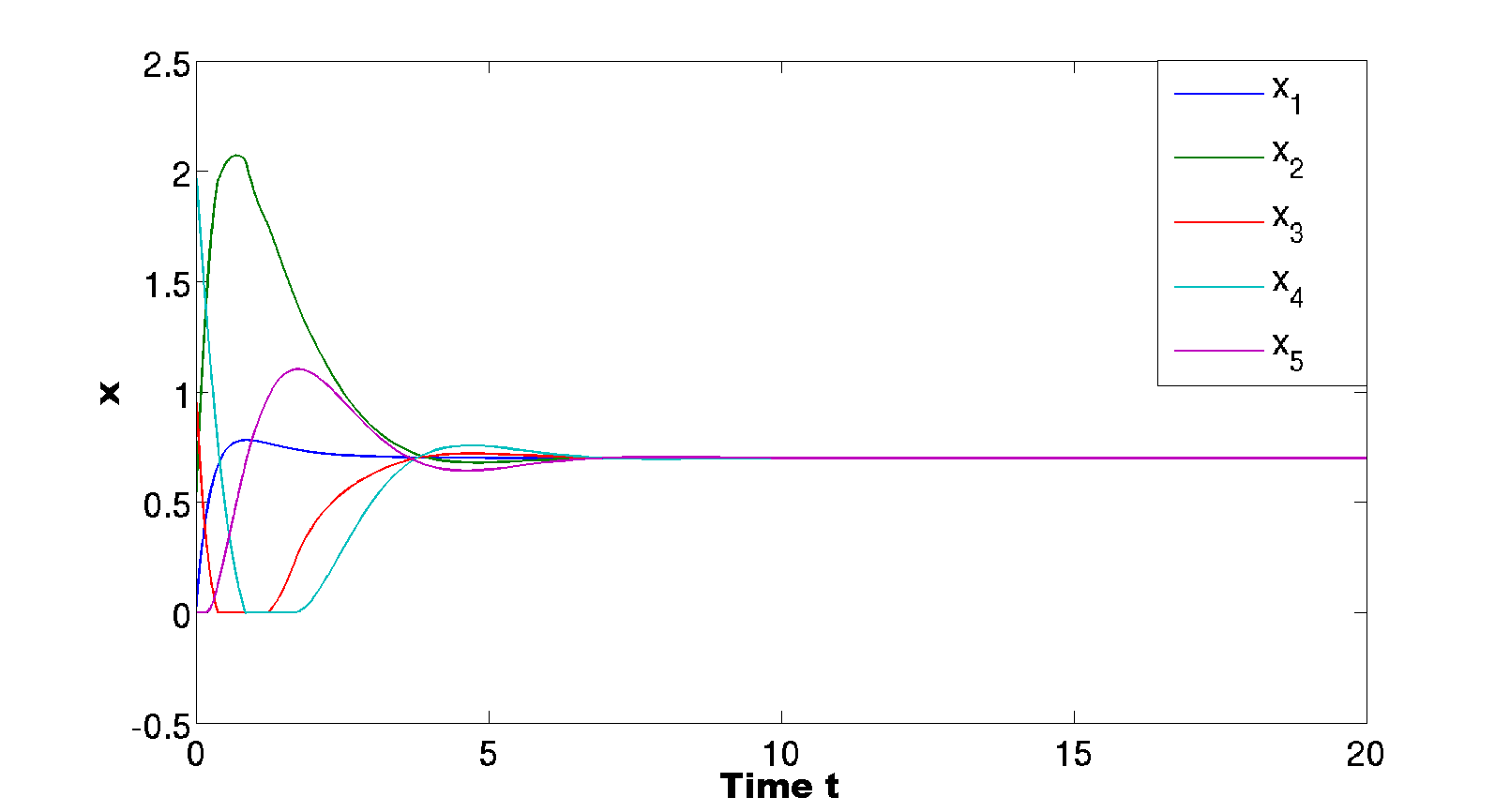}
 \caption{The time-evolutions $x_1(t),x_2(t),x_3(t),x_4(t),x_5(t)$ of the
system (\ref{closedloop_timevary_main}) defined on the graph as in Figure
\ref{network example} using the solution of (\ref{optimization}) as flow
constraints.}
 \label{simulation1}
 \end{figure}
\end{exmp}

\section{CONCLUSIONS}\label{conclusion}

We have considered a basic model of dynamical distribution networks with state
inequality constraints. We have formulated a distributed PI controller structure with time-varying
flow constraints which achieves consensus and maintains the state constraints. The flow constraints have been expressed
in terms of solutions of an optimization problem.   
We have discussed the existence of solutions for the system in the sense
of Filippov, and carried out the stability analysis of the network by
taking the Hamiltonian of the system as the Lyapunov function.

The results of this paper can be extended in a straightforward way to the case 
where the flows on the edges obey a priori constraints; for instance a limitation on the capacity of the pipes in hydraulic networks.

%%%%%%%%%%%%%%%%%%%%%%%%%%%%%%%%%%%%%%%%%%%%%%%%%%%%%%%%%%%%%%%%%%%%%%%%%%%%%%%%

%%%%%%%%%%%%%%%%%%%%%%%%%%%%%%%%%%%%%%%%%%%%%%%%%%%%%%%%%%%%%%%%%%%%%%%%%%%%%%%%

%%%%%%%%%%%%%%%%%%%%%%%%%%%%%%%%%%%%%%%%%%%%%%%%%%%%%%%%%%%%%%%%%%%%%%%%%%%%%%%%

\bibliographystyle{IEEEtran} % use IEEEtran.bst style
\bibliography{ifacconf}

\begin{thebibliography}{10}
\providecommand{\url}[1]{#1}
\csname url@rmstyle\endcsname
\providecommand{\newblock}{\relax}
\providecommand{\bibinfo}[2]{#2}
\providecommand\BIBentrySTDinterwordspacing{\spaceskip=0pt\relax}
\providecommand\BIBentryALTinterwordstretchfactor{4}
\providecommand\BIBentryALTinterwordspacing{\spaceskip=\fontdimen2\font plus
\BIBentryALTinterwordstretchfactor\fontdimen3\font minus
  \fontdimen4\font\relax}
\providecommand\BIBforeignlanguage[2]{{%
\expandafter\ifx\csname l@#1\endcsname\relax
\typeout{** WARNING: IEEEtran.bst: No hyphenation pattern has been}%
\typeout{** loaded for the language `#1'. Using the pattern for}%
\typeout{** the default language instead.}%
\else
\language=\csname l@#1\endcsname
\fi
#2}}

\bibitem{Wei2012}
{A.J. van der Schaft and J. Wei}, ``A {H}amiltonian perspective on the control
  of dynamical distribution networks,'' \emph{4th IFAC Workshop on Lagrangian
  and Hamiltonian Methods for Non Linear Control}, pp. 24--29, 2012.

\bibitem{Wei2013}
{J. Wei and A.J. van der Schaft}, ``Load balancing of dynamical distribution
  networks with flow constraints and unknown in/outflows,'' \emph{Systems \&
  Control Letters}, vol. 62(11), pp. 1001--1008, 2013.

\bibitem{Weimtns2014}
------, ``A graphic condition for the stability of dynamical distribution
  networks with flow constraints,'' \emph{submitted to The 21st International
  Symposium on Mathematical Theory of Networks and Systems}, 2014.

\bibitem{depersis}
C.~D. Persis, ``Balancing time-varying demand-supply in distribution networks:
  an internal model approach,'' \emph{European Control Conference
  (ECC),Z\"{u}rich, Switzerland.}, 2013.

\bibitem{Blanchini2000}
F.~Blanchini, S.Miani, and W.Ukovich, ``Control of production-distribution
  systems with unknown inputs and system failures,'' \emph{Automatic Control,
  IEEE Transactions on}, vol.~45, no.~6, pp. 1072--1081, 2000.

\bibitem{Bauso2011}
D.Bauso, F.Blanchini, L.Giarr\'{e}, and R.Pesenti, ``A decentralized solution
  for the constrained minimum-norm flow,'' \emph{Submitted to Automatic
  Control, IEEE Transactions on}, 2011.

\bibitem{schaftSIAM}
{A.J. van~der Schaft and B.M. Maschke}, ``Port-{H}amiltonian systems on
  graphs,'' \emph{SIAM J. Control and Optimization}, vol. 51(2), pp. 906--937,
  2013.

\bibitem{schaftCDC08}
{A.J. van der Schaft and B.M. Maschke}, ``Conservation laws on
  higher-dimensional networks,'' \emph{Proc. 47th IEEE Conf. on Decision and
  Control}, 2008.

\bibitem{schaftBosgrabook}
------, \emph{Model-Based Control: Bridging Rigorous Theory and Advanced
  Technology, P.M.J. Van den Hof, C. Scherer, P.S.C. Heuberger, eds., chapter
  Conservation laws and lumped system dynamics}.\hskip 1em plus 0.5em minus
  0.4em\relax Berlin-Heidelberg: Springer, 2009.

\bibitem{schaftNECSYS10}
------, ``Port-{H}amiltonian dynamics on graphs: Consensus and coordination
  control algorithms,'' \emph{Proc. 2nd IFAC Workshop on Distributed Estimation
  and Control in Networked Systems}, pp. 175--178, 2010.

\bibitem{allgower11}
{M. B\"{u}rger and D. Zelazo and F. Allg\"{o}wer}, ``Network clustering: A
  dynamical systems and saddle-point perspective,'' \emph{IEEE Conference on
  Decision and Control}, pp. 7825--7830, 2011.

\bibitem{Mesbahi11}
{D. Zelazo and M. Mesbahi}, ``Edge agreement: Graph-theoretic performance
  bounds and passivity analysis,'' \emph{Automatic Control, IEEE Transactions
  on}, vol.~56, no.~3, pp. 544 --555, march 2011.

\bibitem{Bollobas98}
{B. Bollobas}, \emph{Modern Graph Theory}, ser. Graduate Texts in
  Mathematics.\hskip 1em plus 0.5em minus 0.4em\relax New York: Springer, 1998,
  vol. 184.

\bibitem{vanderschaftmaschkearchive}
{A.J van~der Schaft and B.M. Maschke}, ``The {H}amiltonian formulation of
  energy conserving physical systems with external ports,'' \emph{Archiv
  f\"{u}r Elektronik und \"{U}bertragungstechnik}, vol.~49, pp. 362--371, 1995.

\bibitem{vanderschaftbook}
{A.J. van der Schaft}, \emph{$L_2$-Gain and Passivity Techniques in Nonlinear
  Control}, ser. Lect. Notes in Control and Information Sciences.\hskip 1em
  plus 0.5em minus 0.4em\relax Berlin: Springer-Verlag, 1996, vol. 218.

\bibitem{cortes2008}
J.~Cortes, ``Discontinuous dynamical systems,'' \emph{Control Systems, IEEE},
  vol.~28, no.~3, pp. 36--73, 2008.

\end{thebibliography}

\end{document}